\documentclass[a4paper,12pt]{article}
\usepackage[centertags]{amsmath}
\usepackage{amsfonts}
\usepackage{amssymb}
\usepackage{amsthm}
\usepackage{amsmath}
\usepackage{dsfont}
\usepackage{graphicx}
\usepackage{tikz, subfigure}
\usepackage{pstricks-add}
\usepackage{verbatim}

\addtolength{\topmargin}{-2cm} \addtolength{\oddsidemargin}{-1cm}
\addtolength{\textheight}{4cm} \addtolength{\textwidth}{2cm}
\setlength{\parskip}{0.3cm} \setlength{\parindent}{0cm}
\usepackage[latin1]{inputenc}
\usepackage{tikz}
\definecolor {processblue}{cmyk}{0.96,0,0,0}
\usepackage{amsfonts,graphicx,amsmath,amssymb,hyperref,color}
\usepackage[english]{babel}
\usepackage{authblk}

\newtheorem{theorem}{Theorem}[section]

\newtheorem{Lemma}{Lemma}[section]
\newtheorem{Corollary}{Corollary}[section]

\newtheorem{Example}{Example}[section]

\newtheorem{Remark}{Remark}[section]

\AtEndDocument{\bigskip{\footnotesize%
  \textsc{Department of Mathematics, Shanghai Key Laboratory of PMMP, East China Normal University, Shanghai 200241,
People's Republic of China} \par
  \textit{E-mail address:} \texttt{	
chenxiu1216@163.com.} \par
}}
\AtEndDocument{\bigskip{\footnotesize%
  \textsc{Department of Mathematics, Utrecht University, Fac Wiskunde en informatica and MRI, Budapestlaan 6, P.O. Box 80.000, 3508 TA Utrecht, The Netherlands} \par
  \textit{E-mail address:} \texttt{K.Jiang1@uu.nl.} \par
}}
\AtEndDocument{\bigskip{\footnotesize%
  \textsc{Department of Mathematics, Shanghai Key Laboratory of PMMP, East China Normal University, Shanghai 200241,
People's Republic of China} \par
  \textit{E-mail address:} \texttt{wxli@math.ecnu.edu.cn.} \par
}}

\usepackage{lipsum}

\makeatletter
\newcommand*{\rom}[1]{\expandafter\@slowromancap\romannumeral #1@}
\makeatother
\begin{document}
\title{xxxx}
\date{}
 \title{Lipschitz equivalence of a class of self-similar sets }
\author{Xiu Chen, Kan Jiang\thanks{Kan Jiang is the corresponding author } and Wenxia Li}
\maketitle{}
\maketitle 
\begin{abstract}
We consider a class of homogeneous self-similar sets with complete overlaps and give a sufficient condition for the Lipschitz equivalence between
members in this class.
\bigskip

{\bf Keywords}:  complete overlap; homogeneous self-similar sets; Lipschitz equivalence

{\bf  AMS Subject Classifications}:  28A80, 28A78.
\end{abstract}

\section{Introduction}

Let $(X_i, d_i), i=1,2$ be metric spaces.  For  nonempty sets $A_i\subseteq X_i$
we say they are Lipschitz equivalent, denoted by $A_1\simeq A_2$, if there exists a bijection $\phi :A_1 \to A_2$ and a
constant $c>0$ such that
$$
c^{-1}d_1(x,y)\leq d_2(\phi (x), \phi (y))\leq c d_1(x, y)\;\;\textrm{for any}\;\; x, y\in A_1.
$$
 Lipschitz equivalence can be used to classify fractal sets.
Since late 80's  many works have been devoted to
the study of Lipschitz equivalence
(see \cite{DavidSemmes, FaMa, FaMa1, Xi, LLM,  lk, Matt, Raohui, WX, WZD, xix, Xi1} and references therein).
 An effective method, to our knowledge, was first employed in \cite{Raohui} for establishing a bi-Lipschitz mapping between
 the $\{1,4,5\}$-Cantor set and the $\{1,3,5\}$-Cantor set, the main idea of which is  to show
 these two self-similar sets  to have same graph-directed structure satisfying the strong separation
 condition.
   A sufficient condition was given in \cite[Theorem 2.11]{KarmaKan} to judge whether or not a self-similar set has a
 graph-directed structure
 satisfying the open set condition or even the strong separation condition.

In the present we consider the  homogeneous iterated function system (IFS)
   $\{f_{i}(x)=\lambda x+a_i: 1\leq i\leq m \}$ where $x, a_i\in \mathbb R$, $\lambda \in (0,1)$ and the integer $m\geq 3$.  For a vector  $ (k_{1},\ldots,k_{n})$ of integers with $k_1>k_2>\cdots >k_{n}\geq 2$ , let ${\mathbb A}_{k_{1},\ldots,k_{n}}$ be the collection of translations ${\bf a}=(a_1, a_2, \cdots , a_m)$  satisfying the following conditions (I) (II) and (III):

  (I) $ 0=a_1<a_2<\cdots <a_m=1-\lambda $;

  (II) Any three intervals in $\{f_i([0,1]): 1\leq i\leq m\}$ do not intersect. $|f_i([0,1])\cap f_j([0,1])| \in \{\lambda ^{k_{1}},\cdots ,\lambda ^{k_{n}}\}$ whenever $f_i([0,1])\cap f_j([0,1])\neq \emptyset $ with $i<j$, where by $|J|$ we denote the length of an interval $J$;

  (III) Either $f_1([0,1])\cap f_j([0,1])= \emptyset $ for all $j>1$, or $f_m([0,1])\cap f_j([0,1])= \emptyset $ for all $j<m$.

  From (I) and (II) it follows that when $|f_i([0,1])\cap f_j([0,1])|=\lambda ^{k_{\ell}}$ with $i<j$, then $j=i+1$ and $f_i\circ f_m^{k_\ell -1}=f_j\circ f_1^{k_\ell -1}$. Throughout this paper,
  $f^i$ stands for the $i$-th iteration of map $f$ for $i\in \mathbb N$. In particular, $f^0$ stands for the identity.

  For a translation ${\bf a }=(a_1, a_2, \cdots , a_m)\in {\mathbb A}_{k_{1},\ldots,k_{n}}$, Let
  $$
  \gamma _\ell ({\bf a})=\left \{1\leq i\leq m: |f_i([0,1])\cap f_{i+1}([0,1])|=\lambda ^{k_{\ell}}\right \}\;\;\textrm{for}\;\; 1\leq \ell \leq n.
  $$

  It is well known that for each ${\bf a }=(a_1, a_2, \cdots , a_m)\in {\mathbb A}_{k_{1},\ldots,k_{n}}$, there exists a unique nonempty compact set  $K_{\bf a}$ such that $K_{\bf a}=\bigcup _{1\leq i\leq m}f_i(K_{\bf a})$ (see \cite{Hutchinson}). The set $K_{\bf a}$ is called  the self-similar set  generated by the IFS $\{f_{i}(x)=\lambda x+a_i: 1\leq i\leq m \}$. Let   $\#S$  denote   the number of elements of $S$. In this paper we obtain

  \begin{theorem}\label{Main}
  For ${\bf a}, {\bf b}\in {\mathbb A}_{k_{1},\ldots,k_{n}}$ we have $K_{\bf a}\simeq K_{\bf b}$ if  $\#\gamma _\ell ({\bf a})=\#\gamma _\ell ({\bf b})$ for $1\leq \ell \leq n$.
  \end{theorem}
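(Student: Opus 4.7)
The plan is the graph-directed IFS (GDIFS) method flagged in the introduction. For each $\mathbf{a}\in\mathbb{A}_{k_1,\ldots,k_n}$ I will construct a GDIFS satisfying the strong separation condition whose attractor tuple has $K_{\mathbf{a}}$ as its distinguished component, and whose underlying edge-weighted directed multigraph is determined only by the counts $(\#\gamma_1(\mathbf{a}),\ldots,\#\gamma_n(\mathbf{a}))$ together with the ambient list $(k_1,\ldots,k_n)$. The theorem then follows from the standard fact that two GDIFSs with isomorphic graphs and matching contraction ratios, both satisfying SSC, have bi-Lipschitz equivalent attractor components. As a preliminary, the reflection $x\mapsto 1-x$ preserves $\mathbb{A}_{k_1,\ldots,k_n}$ and every count $\#\gamma_\ell$ and induces a bi-Lipschitz bijection on attractors, so I may assume throughout that both $\mathbf{a}$ and $\mathbf{b}$ satisfy the first alternative in (III), i.e.\ $f_1([0,1])\cap f_j([0,1])=\emptyset$ for every $j>1$.

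Under this Case A assumption I introduce the auxiliary sets $L_s^{\mathbf{a}}:=K_{\mathbf{a}}\setminus f_1^s(K_{\mathbf{a}})$ for $s=1,\ldots,k_1-1$, with the convention $L_0^{\mathbf{a}}=\emptyset$. Since $f_1(K_{\mathbf{a}})$ is isolated from all other first-level cylinders, a short induction on $s$ gives $K_{\mathbf{a}}\cap[0,\lambda^s]=f_1^s(K_{\mathbf{a}})$, so each subtraction is clean. Combined with the complete overlap identity $f_i\circ f_m^{k_\ell-1}=f_{i+1}\circ f_1^{k_\ell-1}$ derived in the excerpt, this forces
\[
f_i(K_{\mathbf{a}})\cap f_{i+1}(K_{\mathbf{a}})=f_{i+1}\circ f_1^{k_\ell-1}(K_{\mathbf{a}})\qquad (i\in\gamma_\ell(\mathbf{a})).
\]
Removing each such overlap from the $f_{i+1}$-piece, I obtain the disjoint decompositions
\[
K_{\mathbf{a}}=f_1(K_{\mathbf{a}})\sqcup\bigsqcup_{i=2}^m f_i(T_i),\qquad L_s^{\mathbf{a}}=f_1(L_{s-1}^{\mathbf{a}})\sqcup\bigsqcup_{i=2}^m f_i(T_i)\ \ (s\ge 1),
\]
where $T_i=K_{\mathbf{a}}$ if there is no overlap at $(i-1,i)$ and $T_i=L_{k_\ell-1}^{\mathbf{a}}$ if the overlap there has length $\lambda^{k_\ell}$. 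These decompositions define a GDIFS on the vertex set $\{K_{\mathbf{a}},L_1^{\mathbf{a}},\ldots,L_{k_1-1}^{\mathbf{a}}\}$ with every edge of contraction ratio $\lambda$, and SSC holding by construction.

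The crucial observation is that the resulting labelled graph depends only on $(k_1,\ldots,k_n)$ and $(\#\gamma_\ell(\mathbf{a}))_{\ell=1}^n$: at each vertex the $f_1$-edge targets $K_{\mathbf{a}}$ from $K_{\mathbf{a}}$, targets $L_{s-1}^{\mathbf{a}}$ from $L_s^{\mathbf{a}}$ when $s\ge 2$, and is absent from $L_1^{\mathbf{a}}$; among the remaining $m-1$ edges labelled by $f_2,\ldots,f_m$, exactly $m-1-\sum_\ell\#\gamma_\ell(\mathbf{a})$ point to $K_{\mathbf{a}}$ and $\#\gamma_\ell(\mathbf{a})$ point to $L_{k_\ell-1}^{\mathbf{a}}$ for each $\ell$, independently of where the overlaps occur along the IFS. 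Consequently, whenever $\#\gamma_\ell(\mathbf{a})=\#\gamma_\ell(\mathbf{b})$ for every $\ell$, the GDIFSs associated to $\mathbf{a}$ and $\mathbf{b}$ are isomorphic as edge-weighted directed multigraphs, and the standard bi-Lipschitz theorem for GDIFS attractors under SSC produces the desired bijection $K_{\mathbf{a}}\simeq K_{\mathbf{b}}$.

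The main technical obstacle is the overlap identity $f_i(K_{\mathbf{a}})\cap f_{i+1}(K_{\mathbf{a}})=f_{i+1}\circ f_1^{k_\ell-1}(K_{\mathbf{a}})$: it requires both the Case A lemma $K_{\mathbf{a}}\cap[0,\lambda^s]=f_1^s(K_{\mathbf{a}})$ and the ``any three do not intersect'' clause of (II) to exclude contributions from non-adjacent cylinders. Once this identity is in hand, the construction of the GDIFS and the appeal to the standard bi-Lipschitz theorem are routine.
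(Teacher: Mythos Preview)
Your argument is correct and follows the same overall philosophy as the paper---build a graph-directed IFS with SSC whose combinatorics depend only on the overlap counts, then invoke the standard bi-Lipschitz lemma---but your implementation is genuinely different from the paper's. The paper normalises to the \emph{second} alternative in (III) (so $f_m$ is isolated) and partitions $K_{\mathbf a}$ into $m+k_1-2$ pairwise disjoint pieces: the first $m-1$ are the cylinders $f_j(K_{\mathbf a})$ with the right-end overlap $f_j\circ f_m^{k_\ell-1}(K_{\mathbf a})$ deleted, and the remaining $k_1-1$ form a telescope $f_m^{t+1}(K_{\mathbf a})\setminus f_m^{t+2}(K_{\mathbf a})$ at the right end. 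The resulting GDIFS has edges of several ratios $\lambda,\lambda^2,\ldots$, and the final step glues the piecewise equivalences $K_i\simeq K_i^*$ back together using disjointness of the partition. By contrast, you normalise to the first alternative, keep $K_{\mathbf a}$ itself as a vertex, and add only the $k_1-1$ nested (not disjoint) auxiliary sets $L_s=K_{\mathbf a}\setminus f_1^s(K_{\mathbf a})$; all edges have ratio $\lambda$, and the conclusion $K_{\mathbf a}\simeq K_{\mathbf b}$ drops out of the lemma directly with no gluing step. Your construction is more economical (vertex count $k_1$ versus $m+k_1-2$, uniform edge ratio), while the paper's partition viewpoint makes the ``graph depends only on the counts'' step slightly more transparent because the relabelling $h$ is written down explicitly.
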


  It is clear that $\dim _HE=\dim _HF$ if $E\simeq F$. Thus we have
\setcounter{Corollary}{1}
  \begin{Corollary}
For ${\bf a}, {\bf b}\in {\mathbb A}_{k_1, \cdots , k_n}$ we have $\dim _HK_{\bf a}=\dim _H K_{\bf b}$
 if  $\#\gamma _\ell ({\bf a})=\#\gamma _\ell ({\bf b})$ for $1\leq \ell \leq n$.
\end{Corollary}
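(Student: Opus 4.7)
The plan is to realize both $K_{\mathbf{a}}$ and $K_{\mathbf{b}}$ as attractors of graph-directed iterated function systems satisfying the strong separation condition whose underlying labeled directed multigraphs depend only on the counts $\#\gamma_\ell$, and then invoke the now-standard principle (used e.g.\ in \cite{Raohui,KarmaKan}) that two graph-directed IFS with strong separation and isomorphic labeled multigraphs (with matching contraction ratios) yield bi-Lipschitz equivalent attractors via the natural coding bijection.

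\emph{Construction of the graph-directed system.} Using (III) I may assume without loss of generality that $f_m([0,1])$ meets no other $f_j([0,1])$ (the opposite case is symmetric). Write $\Gamma:=\bigcup_{\ell=1}^{n}\gamma_\ell(\mathbf{a})$ and introduce states
\[
X:=K_{\mathbf{a}},\qquad U_j:=K_{\mathbf{a}}\setminus f_1^{j}(K_{\mathbf{a}}),\quad 1\le j\le k_1-1,
\]
with the convention $U_0:=\emptyset$. Using the identity $f_i\circ f_m^{k_\ell-1}=f_{i+1}\circ f_1^{k_\ell-1}$ recorded in the introduction, I would first verify by induction on $j$ that $K_{\mathbf{a}}\cap f_1^{j}([0,1])=f_1^{j}(K_{\mathbf{a}})$ and that $f_i(K_{\mathbf{a}})\cap f_{i+1}(K_{\mathbf{a}})=f_{i+1}\circ f_1^{k_\ell-1}(K_{\mathbf{a}})$ whenever $i\in\gamma_\ell$. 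These facts yield the system
\[
X=f_1(X)\sqcup\bigsqcup_{i=2}^{m}f_i(Z_i),\qquad U_j=f_1(U_{j-1})\sqcup\bigsqcup_{i=2}^{m}f_i(Z_i),
\]
where $Z_i=X$ when $i-1\notin\Gamma$ and $Z_i=U_{k_\ell-1}$ when $i-1\in\gamma_\ell$.

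\emph{Disjointness and graph structure.} To establish SSC I check disjointness of each union above: non-adjacent summands have disjoint convex hulls by (II), while for adjacent $i,i+1$ with $i\in\gamma_\ell$ the choice $Z_{i+1}=U_{k_\ell-1}$ removes exactly the overlap $f_{i+1}\circ f_1^{k_\ell-1}(K_{\mathbf{a}})$ from $f_{i+1}(K_{\mathbf{a}})$, so $f_i(Z_i)\cap f_{i+1}(Z_{i+1})=\emptyset$; the term $f_1(U_{j-1})$ is similarly disjoint from the other summands because it sits inside $f_1^{j}([0,1])$ and the only interaction with $f_i([0,1])$ for $i\ge 2$ is the overlap already excised. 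Counting edges on the vertex set $\{X,U_1,\dots,U_{k_1-1}\}$: from $X$ there are $m-\#\Gamma$ self-loops and $\#\gamma_\ell(\mathbf{a})$ edges to $U_{k_\ell-1}$ for each $\ell$; from $U_j$ with $j\ge 2$ there is one edge to $U_{j-1}$ together with $m-1-\#\Gamma$ edges to $X$ and $\#\gamma_\ell(\mathbf{a})$ edges to $U_{k_\ell-1}$ for each $\ell$; from $U_1$ only the latter two kinds. Every edge is a similarity of ratio $\lambda$, so the isomorphism class of the labeled directed multigraph is completely determined by the tuple $(\#\gamma_1(\mathbf{a}),\dots,\#\gamma_n(\mathbf{a}))$.

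\emph{Conclusion and main obstacle.} Applying the same construction to $\mathbf{b}$ produces a graph-directed IFS on the analogous states $\{X^{\mathbf{b}},U_1^{\mathbf{b}},\dots,U_{k_1-1}^{\mathbf{b}}\}$ satisfying SSC, and by the hypothesis $\#\gamma_\ell(\mathbf{a})=\#\gamma_\ell(\mathbf{b})$ the two labeled multigraphs and their contraction ratios coincide. The natural bijection between the two attractors at vertex $X$, sending a point with coding path $e_1e_2\cdots$ in the common graph to the point with the same coding in the other system, is then bi-Lipschitz: the separation constants from SSC on both sides bound the ratio of distances in both directions. This yields $K_{\mathbf{a}}=X\simeq X^{\mathbf{b}}=K_{\mathbf{b}}$. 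The main obstacle in executing this plan is the simultaneous verification of SSC for the $X$-equation and the $k_1-1$ auxiliary $U_j$-equations: one must confirm that the single complete-overlap identity $f_i\circ f_m^{k_\ell-1}=f_{i+1}\circ f_1^{k_\ell-1}$, combined with $K_{\mathbf{a}}\cap f_1^{j}([0,1])=f_1^{j}(K_{\mathbf{a}})$, is strong enough to rule out every spurious coincidence between images of different states after excision; once this is in hand the rest is combinatorial bookkeeping on the graph.
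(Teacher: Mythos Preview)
Your proposal actually proves the stronger Theorem~1.1 (Lipschitz equivalence $K_{\bf a}\simeq K_{\bf b}$), whereas the Corollary in the paper is deduced in one line from that theorem by the trivial observation that bi-Lipschitz maps preserve Hausdorff dimension. So strictly speaking you are doing far more work than the paper does for this particular statement.

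Comparing your argument with the paper's proof of Theorem~1.1: both follow the same overall strategy of exhibiting $K_{\bf a}$ and $K_{\bf b}$ as graph-directed self-similar sets with identical labeled multigraphs (ratios all equal to $\lambda$) under SSC, and then invoking the coding bijection. The decompositions, however, differ. The paper works on the \emph{right}: it keeps $f_m$ isolated, excises the overlap $f_j\circ f_m^{k_\ell-1}(K_{\bf a})$ from each $f_j(K_{\bf a})$, and introduces the $k_1-1$ auxiliary compact pieces $f_m^{t+1}(K_{\bf a})\setminus f_m^{t+2}(K_{\bf a})$ and $f_m^{k_1-1}(K_{\bf a})$, giving $m+k_1-2$ vertices in total. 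You work on the \emph{left}: with $f_m$ isolated you puncture at $f_1$ and take as vertices the whole set $X=K_{\bf a}$ together with the $k_1-1$ truncations $U_j=K_{\bf a}\setminus f_1^{\,j}(K_{\bf a})$, giving only $k_1$ vertices. Your graph is smaller and the bookkeeping is lighter; the price is that the $U_j$ are not compact, so the textbook existence/uniqueness statement for graph-directed attractors (and Lemma~2.1 as formulated in the paper) does not literally apply. This is harmless because you verify the decomposition equations directly for these specific sets and run the coding bi-Lipschitz argument by hand, but it is worth flagging explicitly in a write-up. Apart from this cosmetic point, your plan is correct and the ``main obstacle'' you identify is exactly the disjointness check the paper also carries out; your reduction of it to the identity $f_i\circ f_m^{k_\ell-1}=f_{i+1}\circ f_1^{k_\ell-1}$ together with $K_{\bf a}\cap f_1^{\,j}([0,1])=f_1^{\,j}(K_{\bf a})$ is sound.
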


We prove Theorem \ref{Main}  and give some examples in the next section.

\section{Proof of Theorem \ref{Main}}

Before proving Theorem \ref{Main}, let us recall the graph-directed self-similar set (see \cite{mauwil}).
Let ${\mathcal G}=(V,E)$ be a directed graph where $V$ is a finite set of  vertexes and $E$ is a finite set
 of directed edges.
Assume that for any $u\in V$ there is at least one edge in $E$ starting from $u$.
For an $e\in E$, let  $f_e:\mathbb{R}^n\to\mathbb{R}^n $ be a similitude
 with  ratio $\rho_e\in(0,1)$, namely
$$
|f_e(x)-f_e(y)|=\rho_e|x-y|
\;\;\textrm{for any}\;\;    x,y\in \mathbb{R}^n.
$$
 Then there exist unique nonempty compact sets $\{F_u: u\in V\}$ such that
\begin{equation}\label{gdsss}
F_u=\bigcup_{v\in V}\bigcup_{e\in E_{u,v}}f_e(F_v)\;\; \textrm{for all }\;\; u\in V,
\end{equation}
where $E_{u, v}$ is the set of directed edges starting from $u$ and ending at $v$. The compact sets  $\{F_u: u\in V\}$ in
(\ref{gdsss})
is called the graph-directed self-similar sets generated by $\{V, E, \{f_e: e\in E\}\}$.
In addition, $\{F_u: u\in V\}$ is said to satisfy the
strong separation condition if the sets in the right side of (\ref{gdsss}) are pairwise disjoint. An easy-to-prove result
on the Lipschitz equivalence between two graph-directed self-similar sets is as follows ( also see \cite{Raohui} ).
\begin{Lemma}\label{gdlip}
Let $\{F_u: u\in V\}$  and $\{G_u: u\in V\}$ be
the graph-directed self-similar sets generated by $\{V, E, \{f_e: e\in E\}\}$ and $\{V, E, \{g_e: e\in E\}\}$, respectively.
Suppose that for each $e\in E$ the similitudes $f_e$ and $g_e$ have the same ratio $\rho _e$, and
both $\{F_u: u\in V\}$  and $\{G_u: u\in V\}$ satisfy the strong separation condition. Then for each $u\in V$, we have
$F_u\simeq G_u$.
\end{Lemma}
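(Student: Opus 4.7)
The plan is to transport the standard symbolic coding from $\{F_u\}$ to $\{G_u\}$ via the shared edge set, then obtain the bi-Lipschitz bounds from the uniform positive separation provided by the strong separation condition.

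First I would set up coding. For any $u\in V$, let $\Sigma_u$ be the set of infinite admissible paths $\mathbf{e}=(e_1,e_2,\ldots)$ starting at $u$, i.e.\ $e_1\in E_{u,v_1}$, $e_2\in E_{v_1,v_2}$, and so on. For such $\mathbf{e}$ the nested sequence $f_{e_1}\circ\cdots\circ f_{e_n}(F_{v_n})$ has diameters $\rho_{e_1}\cdots\rho_{e_n}\cdot\mathrm{diam}(F_{v_n})\to 0$, so it shrinks to a single point, giving a continuous projection $\pi_F:\Sigma_u\to F_u$. Under the strong separation condition the sets $f_e(F_v)$ with $e$ starting from $u$ are pairwise disjoint, and this disjointness is inherited at every level, so $\pi_F$ is in fact a bijection. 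The same construction with $g_e$ yields a bijection $\pi_G:\Sigma_u\to G_u$, and I define
\[
\phi_u=\pi_G\circ\pi_F^{-1}:F_u\longrightarrow G_u.
\]

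Next I would extract the uniform constants. Because $V$ is finite and the sets $\{f_e(F_v):v\in V,\ e\in E_{u,v}\}$ are pairwise disjoint for each $u$, the quantities
\[
\delta=\min_{u\in V}\ \min_{e\neq e'\ \text{both start at}\ u} d\bigl(f_e(F_{t(e)}),\,f_{e'}(F_{t(e')})\bigr),\quad D=\max_{u\in V}\mathrm{diam}(F_u)
\]
are finite and positive, and similarly we get $\delta'>0$ and $D'<\infty$ for the system $\{g_e\}$. For distinct $x,y\in F_u$ with codes $\mathbf{e},\mathbf{e}'\in\Sigma_u$, let $n\geq 1$ be the first index with $e_n\neq e'_n$, and write $\rho=\rho_{e_1}\cdots\rho_{e_{n-1}}$ (with $\rho=1$ when $n=1$). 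Since the similitudes $f_{e_1},\ldots,f_{e_{n-1}}$ contract by exactly $\rho$, the points $x$ and $y$ lie in two distinct images $f_{e_1}\circ\cdots\circ f_{e_{n-1}}(f_{e_n}(F_\bullet))$ and $f_{e_1}\circ\cdots\circ f_{e_{n-1}}(f_{e'_n}(F_\bullet))$, so
\[
\rho\,\delta\ \leq\ |x-y|\ \leq\ \rho\,D,
\]
and by the same argument applied to the $g$-system $\rho\,\delta'\leq|\phi_u(x)-\phi_u(y)|\leq\rho\,D'$.

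Combining these two double inequalities gives
\[
\frac{\delta'}{D}\,|x-y|\ \leq\ |\phi_u(x)-\phi_u(y)|\ \leq\ \frac{D'}{\delta}\,|x-y|,
\]
so $\phi_u$ is a bi-Lipschitz bijection with constant $c=\max(D/\delta',\,D'/\delta)$, proving $F_u\simeq G_u$. I do not anticipate a genuine obstacle here; the only delicate point is justifying that the strong separation condition upgrades the surjective coding $\pi_F$ to a bijection and yields a uniform lower bound $\delta>0$, but both follow from finiteness of $V$ and the compactness of the $F_v$'s.
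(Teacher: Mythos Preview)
Your proposal is correct and follows essentially the same route as the paper: both construct the symbolic coding $\Sigma_u$, use the strong separation condition to make the projections $\pi_F,\pi_G$ bijections, define the map as $\pi_G\circ\pi_F^{-1}$, and obtain the bi-Lipschitz estimate by comparing distances to the product $\rho_{e_1}\cdots\rho_{e_{n-1}}$ at the first index where codes diverge. The only cosmetic difference is that the paper bundles the separation and diameter constants for both systems into single constants $c_*,c^*$, whereas you keep $(\delta,D)$ and $(\delta',D')$ separate before combining them.
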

\begin{proof}
Fix a $u\in V$. We denote by $E_v$ the set of directed edges starting from $v$ for $v\in V$. For a directed edge $e\in E$ we denote its initial and ending points by $e^-$ and $e^+$, respectively.  Let
$$
c_*=\min _{v\in V}\min \left \{d(f_{e_*}(F_{e_*^+}), f_{e_{**}}(F_{e_{**}^+})),\; d(g_{e_*}(G_{e_*^+}), g_{e_{**}}(G_{e_{**}^+})): e_*\ne e_{**}\in E_v\right \}
$$
and
$$
c^*=\max \left\{\textrm{diameter of the set }\; \bigcup _{v\in V}F_v,\; \textrm{ diameter of the set }\; \bigcup _{v\in V}G_v\right \}.
$$
Then $c_*, c^*>0$.
 An infinite
  sequence of directed edges
$e_1e_2\cdots $ is called admissible if $e_i^+$ coincides with $e^-_{i+1}$ for all $i\in \mathbb N$. Let
$$
\Sigma _u=\{e_1e_2\cdots : \; e_1e_2\cdots \;\textrm{is admissible with}\;\; e^-_1=u\}.
$$
Then the maps
$$
\Pi_F(e_1e_2\cdots )=\bigcap _{i=1}^\infty f_{e_1}\circ \cdots \circ f_{e_i}(F_{e^+_i})\;\textrm{and}\; \Pi_G(e_1e_2\cdots )=\bigcap _{i=1}^\infty g_{e_1}\circ \cdots \circ g_{e_i}(G_{e^+_i})
$$
are bijections  between $\Sigma _u$ and $F_u$, and between $\Sigma _u$ and $G_u$ respectively.
We shall check the bijection $\Pi_G\circ \Pi^{-1}_F$
is bi-Lipschitz. Let $x,y\in F_u$ with $x\ne y$. Then there exist unique
$(e_i), (s_i)\in \Sigma _u$  such that $x=\Pi_F(e_1e_2\cdots ), y=\Pi_F(s_1s_2\cdots )$. Let $\ell $ be the smallest integer such that
$e_\ell\ne s_\ell $. Then we have $e_\ell^-= s_\ell ^-$ and $e_\ell^+\ne s_\ell ^+$ because of $e_1^-=s_1^-=u$. This implies that $x=f_{e_1}\circ \cdots \circ f_{e_{\ell -1}}(x^*)$ and $y=f_{e_1}\circ \cdots \circ f_{e_{\ell -1}}(y^*)$ with $x^*\in f_{e_\ell}( F_{e_\ell^+}), y^*\in f_{s_\ell }( F_{s_\ell ^+})$.
So
$$
c_*\prod _{i=1}^{\ell -1}\rho _{e_i}\leq |x-y|\leq c^* \prod _{i=1}^{\ell -1}\rho _{e_i},
$$
Note that
$$
\Pi_G\circ \Pi^{-1}_F(x)=\Pi_G(e_1e_2\cdots )\;\;\textrm{and}\;\;\Pi_G\circ \Pi^{-1}_F(y)=\Pi_G(s_1s_2\cdots ),
$$
which implies, by the same argument as above, that
$$
c_*\prod _{i=1}^{\ell -1}\rho _{e_i}\leq |\Pi_G\circ \Pi^{-1}_F(x)-\Pi_G\circ \Pi^{-1}_F(y)|\leq c^* \prod _{i=1}^{\ell -1}\rho _{e_i}.
$$
Therefore, $\Pi_G\circ \Pi^{-1}_F$
is bi-Lipschitz.
\end{proof}

Now we are ready to prove Theorem \ref{Main}.

\begin{proof}[Proof of Theorem \ref{Main}]
By $\{f_i: 1\leq i\leq m\}$ and $\{g_i: 1\leq i\leq m\}$ we denote the iterated function systems corresponding to translations ${\bf a}=(a_1, a_2, \cdots , a_m)$ and
${\bf b}=(b_1, b_2, \cdots , b_m)$, respectively.

Without loss of generality we may assume that $f_m([0,1])\cap f_j([0,1])= \emptyset $ for all $j<m$, and that
$g_m([0,1])\cap g_j([0,1])= \emptyset $ for all $j<m$ in condition (III). To understand it, one only need to notice the following facts:
we have that $K_{\bf c}=1-K_{\bf a}\simeq K_{\bf a}$ for the translation
${\bf c}=(1-\lambda -a_m, 1-\lambda -a_{m-1}, \cdots, 1-\lambda -a_{2}, 1-\lambda -a_{1})\in {\mathbb A}_{k_1, \cdots , k_n}$, and $\#\gamma _\ell ({\bf c})=\#\gamma _\ell ({\bf a})$ for $1\leq \ell \leq n$. Let
 $$
  \gamma _{n+1} ({\bf a})=\{1, \cdots , m-1\}\setminus \bigcup _{\ell=1}^n \gamma _\ell ({\bf a}).
  $$
   We relabel the elements of $\gamma _\ell ({\bf a})$ in its increasing order by
  digits $\{1+\sum _{j=1}^{\ell -1}\#\gamma _{j} ({\bf a}), 2+\sum _{j=1}^{\ell -1}\#\gamma _{j} ({\bf a}),\cdots , \sum _{j=1}^{\ell }\#\gamma _{j} ({\bf a})\}$ with $\sum _{j=1}^{0}\#\gamma _{j} ({\bf a})=0$ and $1\leq \ell \leq n+1$. By $h(\cdot )$ we denote this relabeling. Thus
  $h(j)\in \{1+\sum _{j=1}^{\ell -1}\#\gamma _{j} ({\bf a}), 2+\sum _{j=1}^{\ell -1}\#\gamma _{j} ({\bf a}),\cdots , \sum _{j=1}^{\ell }\#\gamma _{j} ({\bf a})\}$
  for $j\in \gamma _\ell ({\bf a})$.

  We partition $K_{\bf a}$ into $m+k_1-2$ pairwise disjoint nonempty compact sets, denoted by $K_1, \cdots , K_{m+k_1-2}$. The first $m-1$ members of them are defined by
  \begin{equation}\label{firsmminusone}
  K_{h(j)}=\left \{
  \begin{array}{ll}
  f_j(K_{\bf a})\setminus f_j\circ f^{k_\ell -1}_m(K_{\bf a})\;\;&\;\;\textrm{for}\;\; j\in \gamma _{\ell }({\bf a}),\; 1\leq \ell \leq n\\
  f_j(K_{\bf a}) ;&\;\;\textrm{for}\;\; j\in \gamma _{n+1 }({\bf a}).
  \end{array}
  \right.
  \end{equation}
  The later $k_1-1$ members of $K_i$s are defined by
  \begin{equation}\label{latermem}
  \left \{
  \begin{array}{ll}
  K_{m+k_1-2}=f_m^{k_1-1}(K_{\bf a})   & \;\; \\
  K_{m+t}=f_m^{t+1}(K_{\bf a})\setminus f^{t+2}_m(K_{\bf a}) \;\; \textrm{for}\;\; 0\leq t< k_1-2.
  \end{array}
  \right.
  \end{equation}
  From (\ref{firsmminusone}) and (\ref{latermem}) it follows that $K_{\bf a}=\bigcup _{i=1}^{m+k_1-2}K_i$ with disjoint union. It is important
  to notice that for $1\leq \ell \leq n$
  \begin{equation*}
  f_m(K_{\bf a})=\left \{
  \begin{array}{ll}
  K_m\cup K_{m+1}\cup \cdots \cup K_{m+k_\ell -3}\cup f_m^{k_\ell -1}(K_{\bf a})&\textrm{with disjoint union for}\;\; k_\ell \geq 3\\
   f_m(K_{\bf a}) \;\;&\textrm{ for}\; k_\ell =2.
  \end{array}
  \right.
  \end{equation*}
Thus we have for $j\in \gamma _{\ell }({\bf a})$ with $1\leq \ell \leq n$
\begin{equation*}
\begin{split}
K_{h(j)}&=f_j(K_{\bf a})\setminus f_j\circ f^{k_\ell -1}_m(K_{\bf a})=\left (f_j(K_1\cup \cdots \cup K_{m-1}\cup f_m(K_{\bf a}))\right )
\setminus f_j\circ f^{k_\ell -1}_m(K_{\bf a})\\
&=\bigcup _{i=1}^{m+k_\ell -3}f_j(K_i).
\end{split}
\end{equation*}
It is obvious that for $j\in \gamma _{n+1}({\bf a})$
$$
K_{h(j)}
=\bigcup _{i=1}^{m+k_1 -2}f_j(K_i).
$$
Finally, Note that $K_{\bf a}=K_1\cup \cdots \cup K_{m-1}\cup f_m(K_{\bf a})$ with disjoint union. Thus, for $0\leq t< k_1-2$
 \begin{equation*}
K_{m+t}=
  f_m^{t+1}(K_{\bf a})\setminus f_m^{t+2}(K_{\bf a})=\bigcup _{i=1}^{m-1} f^{t+1}_m(K_i)
  \end{equation*}
and
 \begin{equation*}
K_{m+k_1-2}=\left \{
\begin{array}{ll}
 f_m (K_{m+k_1-3})\cup f_m(K_{m+k_1-2})\;&\textrm{when}\; k_1\geq 3\\
 \bigcup _{i=1}^m f_m(K_i) \;&\textrm{when}\; k_1=2.
\end{array}
\right.
 \end{equation*}

Therefore, $(K_1, \cdots , K_{m+k_{1}-2})$ are graph-directed self-similar sets satisfying the strong separation condition. By the same argument as above
by replacing $f_i$ by $g_i$, one can get pairwise disjoint nonempty compacts $K^*_{i}$ with $K_{\bf b}=\bigcup _{1\leq i\leq {m+k_{1}-2}}K^*_{i}$. The $(K^*_1, \cdots , K^*_{m+k_{1}-2})$ are graph-directed self-similar sets satisfying the strong separation condition and obey the same equations as $K_i$s
 with replacing $f_i$ by $g_i$. Thus $K_i\simeq K^*_i$ for $1\leq i\leq m+k_1-2$, and so $K_{\bf a}\simeq K_{\bf b}$ because of the disjointness of $K_i$s and disjointness of $K^*_i$s.
\end{proof}

\setcounter{Example}{1}
\begin{Example}\label{EX}
Let $0<\lambda<5^{-1}$. Take  ${\bf a}=(0, \lambda(1-\lambda), 2\lambda(1-\lambda), 3\lambda,
1-\lambda)$ and
${\bf b}=(0, \lambda(1-\lambda), 2\lambda, 3\lambda-\lambda^2,
1-\lambda)$. Then one can check that ${\bf a}, {\bf b}\in {\mathbb A}_2$,  $\gamma _1({\bf a})=\{1,2\}$
 and $\gamma _1({\bf b})=\{1, 3\}$. Thus $K_{\bf a}\simeq K_{\bf b}$ by Theorem \ref{Main}.
\end{Example}

The approach presented in this paper can be also applied for higher dimensional case.
\begin{Example}\label{EX2}
Let $0<\lambda<(2-\sqrt{2})/2$. Consider two IFSs $\{f_i: 1\leq i\leq 6\}$ and  $\{g_i: 1\leq i\leq 6\}$ where
\begin{equation*}
\begin{array}{ll}
  f_{1}(x,y)=\lambda (x, y),\;
    &f_{2}(x,y)=\lambda (x, y)+(1-\lambda, 0),\\
  f_{3}(x,y)=\lambda (x, y)+(1-\lambda,  1-\lambda ),\;
    & f_{4}(x,y)=\lambda (x, y)+(0, 1-\lambda),\\
  f_{5}(x,y)=\lambda (x, y)+(\lambda(1-\lambda),  (1-\lambda)^2 ),\,  & f_{6}(x,y)=\lambda (x, y)+(0, (1-\lambda)(1-2\lambda)),
     \end{array}
  \end{equation*}
 and $g_{6}(x,y)=\lambda (x, y)+(\lambda(1-\lambda), \lambda(1-\lambda))$
 with  $ g_i(x,y)=f_i(x,y)$ for $1\leq i\leq 5$. Let $F$ and $G$ be the self-similar sets generated by IFSs $\{f_i: 1\leq i\leq 6\}$ and  $\{g_i: 1\leq i\leq 6\}$, respectively. Then $F\simeq G$.
 \end{Example}
\begin{proof}
 Figure \ref{figtwo} shows locations of squares $f_i([0,1]^2), 1\leq i\leq 6$ and squares $g_i([0,1]^2), 1\leq i\leq 6$.
 Let $F_i=f_i(F)$ for $i=1,2,3,5$, $F_4=f_4(F)\setminus f_4\circ f_2(F)$ and
 $F_6=f_6(F)\setminus f_6\circ f_3(F)$. Then $F_i, 1\leq i\leq 6$, are pairwise disjoint nonempty compact sets such that $F=\bigcup _{1\leq i\leq 6}F_i$ since
 $f_{4}\circ f_2=f_{5}\circ f_4$ and  $f_{6}\circ f_3=f_{5}\circ f_1$.

Thus we have
\begin{equation*}
\left \{
\begin{array}{ll}
F_i= f_i(F_1)\cup f_i(F_2)\cup f_i(F_3)\cup f_i(F_4)\cup f_i(F_5)\cup f_i(F_6)\;\; &\;\textrm{for}\;\; i=1,2,3,5\\
F_4=f_4(F_1)\cup f_4(F_3)\cup f_4(F_4)\cup f_4(F_5)\cup f_4(F_6)\;\; &
\\
F_6=f_6(F_1)\cup f_6(F_2)\cup f_6(F_4)\cup f_6(F_5)\cup f_6(F_6)\;\; &
\end{array}
\right.
\end{equation*}
\begin{figure}
  \includegraphics[width=380pt]{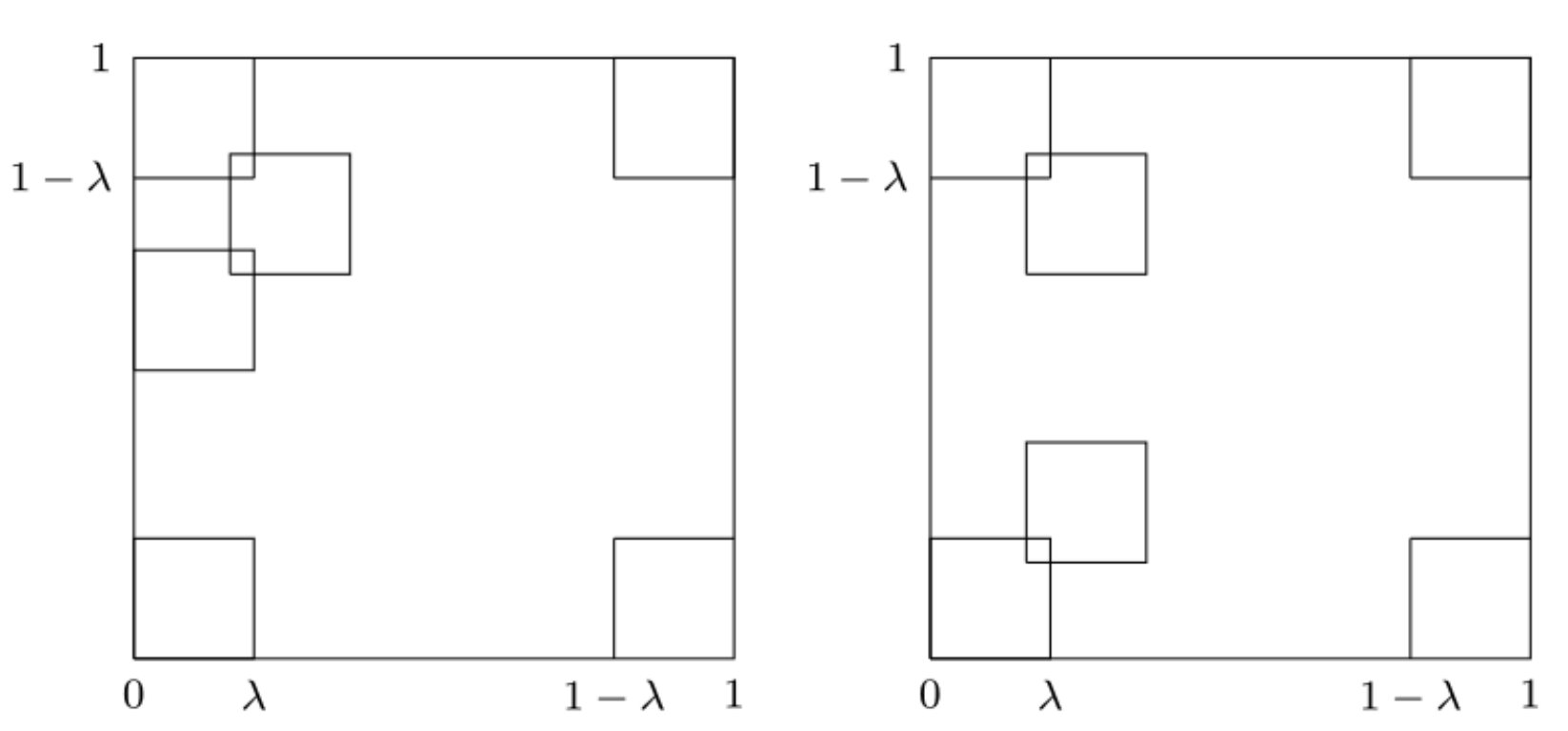}\\
  \caption{Squares $f_i([0,1]^2)$ on the left side and Squares $g_i([0,1]^2)$ on the right side}\label{figtwo}
\end{figure}
By the same way as above let $G_1=g_5(G)$, $G_2=g_2(G)$, $G_3=g_3(G)$, $G_4=g_4(G)\setminus g_4\circ g_2(G)$, $G_5=g_6(G)$ and $G_6=g_1(G)\setminus g_1\circ g_3(G)$.
We have $G_i, 1\leq i\leq 6$, are pairwise disjoint nonempty compact sets with $G=\bigcup _{1\leq i\leq 6}G_i$ and satisfy
\begin{equation*}
\left \{
\begin{array}{l}
G_1= g_5(G_1)\cup g_5(G_2)\cup g_5(G_3)\cup g_5(G_4)\cup g_5(G_5)\cup g_5(G_6)\\
G_2= g_2(G_1)\cup g_2(G_2)\cup g_2(G_3)\cup g_2(G_4)\cup g_2(G_5)\cup g_2(G_6)\\
G_3= g_3(G_1)\cup g_3(G_2)\cup g_3(G_3)\cup g_3(G_4)\cup g_3(G_5)\cup g_3(G_6)\\
G_5= g_6(G_1)\cup g_6(G_2)\cup g_6(G_3)\cup g_6(G_4)\cup g_6(G_5)\cup g_6(G_6)\\
G_4=g_4(G_1)\cup g_4(G_3)\cup g_4(G_4)\cup g_4(G_5)\cup g_4(G_6)
\\
G_6=g_1(G_1)\cup g_1(G_2)\cup g_1(G_4)\cup g_1(G_5)\cup g_1(G_6).
\end{array}
\right.
\end{equation*}
Thus $F\simeq G$ by Lemma \ref{gdlip}.
\end{proof}

\begin{Example}
Let $0<\lambda <\frac{1}{7}$. Let $G$ be the self-similar set generated by the IFS $\{g_i: 1\leq i\leq 6\}$  given  in Example \ref{EX2}. Let $F$ be the self-similar set generated by the IFS $\{f_i: 1\leq i\leq 6\}$ where  $f_{1}(x)=\lambda x$,
  $ f_{2}(x)=\lambda x +2\lambda $,
  $f_{3}(x)=\lambda x+3\lambda-\lambda^2$,
  $ f_{4}(x)=\lambda x +4\lambda-2\lambda^2$,
$
  f_{5}(x)=\lambda x+5\lambda$ and
  $ f_{6}(x)=\lambda x +1-\lambda$. Then $F\simeq G$.
\end{Example}
\begin{proof}
Note that $F^*=F\times \{0\}$ is the self-similar set in ${\mathbb R}^2$ generated by the IFS:
\begin{equation*}
\begin{array}{ll}
  f^*_{1}(x,y)=\lambda (x, y),\;
    &f^*_{2}(x,y)=\lambda (x, y)+(2\lambda, 0),\\
  f^*_{3}(x,y)=\lambda (x, y)+(3\lambda-\lambda^2,  0),\;
    & f^*_{4}(x,y)=\lambda (x, y)+(4\lambda-2\lambda^2, 0),\\
  f^*_{5}(x,y)=\lambda (x, y)+(5\lambda, 0),\,  & f^*_{6}(x,y)=\lambda (x, y)+(1-\lambda, 0).
     \end{array}
  \end{equation*}
By letting $F_1=f^*_3(F^*), F_2=f^*_1(F^*), F_3=f^*_6(F^*),
F_5=f^*_5(F^*)$, $F_6=f^*_2(F^*)\setminus f^*_2\circ f^*_6(F^*), F_4=f^*_4(F^*)\setminus f^*_4\circ f^*_1(F^*)$, one can get $F^*$ has the same graph-directed structure as $G$. Thus we have $G\simeq F^*\simeq F$ by Lemma \ref{gdlip}.
\end{proof}
\setcounter{Remark}{4}
\begin{Remark}
 It is natural to compare our method with the idea used in \cite{Raohui}. On the one hand, our method cannot prove the Lipschitz equivalence between  the $\{1,4,5\}$-Cantor set and the $\{1,3,5\}$-Cantor set. The main difficulty, which is crucial,  is that our idea   only transforms the $\{1,4,5\}$-Cantor set  into a graph-directed self-similar sets with the open set condition rather than the strong separation condition. It is not enough if we only obtain the open set condition. That is why we cannot reprove the main result of \cite{Raohui}.  On the other hand, in terms of the approach of \cite{Raohui}, it seems that we cannot obtain Theorem \ref{Main}.    In brief, these two methods above are independent, i.e. the idea of   \cite{Raohui} is useful when one tackles the self-similar sets with the open set condition, while our method is effective for the self-similar sets with exact overlaps.
\end{Remark}

\section*{Acknoledgment}
The first  author was granted by the China Scholarship Council No. 201606140059.  The second author was granted by the China Scholarship Council No. 201206140003.  The third author  was supported by  NSFC No. 11271137, 11571144, 11671147 and Science and Technology Commission of Shanghai Municipality (STCSM), grant No. 13dz2260400.

\end{document}